\newcommand{\blue}{\textcolor{blue}}
\def\R{{\Bbb R}}
\def\C{{\Bbb C}}
\def\Z{{\Bbb Z}}
\theoremstyle{theorem}
\newtheorem{thm}{Theorem}
\newtheorem{lem}[thm]{Lemma}
\theoremstyle{definition}
\newtheorem{rem}[thm]{Remark}
\begin{document}

\title[Double log smooth structure]{
Corrigendum of ``{\em  Construction of Kuranishi structures on the moduli spaces of pseudo holomorphic disks I}, Surveys in Differential Geometry XXII (2018), 133--190''}
\address{Simons Center for Geometry and Physics,
State University of New York, Stony Brook, NY 11794-3636 U.S.A.} 
\email{kfukaya@scgp.stonybrook.edu}
\address{Center for Geometry and Physics, Institute for Basic Sciences (IBS), Pohang, Korea \& Department of Mathematics,
POSTECH, Pohang, Korea} \email{yongoh1@postech.ac.kr}
\address{Graduate School of Mathematics,
Nagoya University, Nagoya, Japan} \email{ohta@math.nagoya-u.ac.jp}
\address{Research Institute for Mathematical Sciences, Kyoto University, Kyoto, Japan}
\email{ono@kurims.kyoto-u.ac.jp}
\author{Kenji Fukaya, Yong-Geun Oh, Hiroshi Ohta, Kaoru Ono}
\date{2024 February}

\maketitle

This note is the corrigendum of \cite[Lemma 9.1]{const1}. It concerns a coordinate system 
near the boundary and corners of a smooth structure of
 the compactified moduli space $\mathcal M^{\rm d}_{k+1,\ell}$ of stable marked disk with $k+1$ boundary marked points 
and $\ell$ interior marked point. 
{(See  \cite[Definition 2.3]{const1} for the definition of $\mathcal M^{\rm d}_{k+1,\ell}$.)}
The statement of the lemma has been also used in the applications appearing in
our several other related articles. This corrigendum will not affect 
them and exactly the same proofs as therein apply if one replaces 
the statement of \cite[Lemma 9.1]{const1} 
by that of Lemma \ref{lem91} below. (The authors thank A. Daemi and a referee of \cite{DF}, 
who pointed out this error.)

\section{Overview of the corrigendum}

Let us recall the notation we used in \cite[Lemma 3.4]{const1}.
We study the map
\begin{equation}\label{form9393}
\Phi : 
\prod_{a\in \mathcal A_{\bf p}^{\rm s} \cup \mathcal A_{\bf p}^{\rm d}} \mathcal V_a 
\times [0,c)^{m_{\rm d}} \times (D^2_{\circ}(c))^{m_{\rm d}}
\to \mathcal M^{\rm d}_{k+1,\ell}.
\end{equation}
Here
$\mathcal M^{\rm d}_{k+1,\ell}$ is the compactified moduli space of 
stable marked disk with $k+1$ boundary marked points and $\ell$ interior marked points:
\begin{itemize}
\item ${\bf p}$ denotes {an element of $\mathcal M^{\rm d}_{k+1,\ell}$}.
\item  $\mathcal A_{\bf p}^{\rm s}$ (resp. $\mathcal A_{\bf p}^{\rm d}$) is the set of 
irreducible components of ${\bf p}$ which is a sphere (resp. disk).
\end{itemize}
We denote by  {$\Sigma_a$} the irreducible component $a$ of ${\bf p}$ and use the following notations:
\begin{itemize}
\item ${\bf p}_a$ is {the marked disk or the marked sphere 
consisting of the irreducible component $\Sigma_a$ 
together with the marked points of {\bf p} on $\Sigma_a$ and  the nodal points of {\bf p} on $\Sigma_a$.}
\item 
$\mathcal V_a$ is a neighborhood of ${\bf p}_a$ in its associated moduli space.
\item 
$m_{\rm d}$ (resp. $m_{\rm s}$) is the number of the boundary nodes (resp. 
interior nodes) of ${\bf p}$. 
\item $[0,c)$  (resp. $D^2_{\circ}(c)$) 
is the parameter space to smoothen the boundary nodes (resp. interior nodes).
\end{itemize}

The map $\Phi_{s,\rho}$ is determined if we fix an
analytic family of coordinates of the nodal points in the sense of \cite[Definition 3.1]{const1}.
See \cite[Lemma 3.4]{const1}.

For $r_j \in [0,c)$ ($j=1,\dots, m_d$)
and  $\sigma_i \in D^2_0(c)$, ($i=1,\dots,m_s$), we define\footnote{We change the symbol $s$ used in \cite{const1} to $t$ here.
This is because we use $S$ in (\ref{form912}).} a coordinate system given by
\begin{equation}\label{form910}
\aligned
&T^{\rm d}_j = -{\rm log} r_j \in \R_{+} \cup \{\infty\},  \\
&T^{\rm s}_i = -{\rm log} \vert\sigma_i\vert\in \R_{+}\cup \{\infty\}, 
\quad \theta_i = -{\rm Im}({\log} \sigma_i) \in \R/2\pi\Z.
\endaligned
\end{equation}

\begin{equation}\label{form920}
t_j = 1/T^{\rm d}_j \in [0,-1/\log c), 
\quad
\rho_i =  \exp({\sqrt{-1}}\theta_i)/T^{\rm s}_j \in D^2(-1/\log c).
\end{equation}
Note that $\sigma_i = e^{-z}$ is a bounded holomorphic function of 
 the variable $z = T_i^{\rm s} + \sqrt{-1} \theta_i$ on the right half plane
 $\{z \in \C \mid \text{\rm Re} z > 0\}$.
\par

Composing  these coordinate changes with the map $\Phi$, we defined
\begin{equation}\label{form93930}
\Phi_{t,\rho} : 
\prod_{a\in \mathcal A_{\bf p}^{\rm s} \cup \mathcal A_{\bf p}^{\rm d}} \mathcal V_a 
\times [0,-1/\log c)^{m_{\rm d}} \times (D^2_{\circ}(-1/\log c))^{m_{\rm s}}
\to \mathcal M^{\rm d}_{k+1,\ell}.
\end{equation}
In  \cite[Lemma 9.1]{const1}
it was claimed that there is a unique smooth structure 
on $\mathcal M^{\rm d}_{k+1,\ell}$
such that {(\ref{form93930})} is a diffeomorphism.
There is an error in this statement. 
(It is true that there is a unique $C^1$ structure 
on $\mathcal M^{\rm d}_{k+1,\ell}$
such that {(\ref{form93930})} is a  diffeomorphism.)

To motivate and explain the idea of our current corrigendum, 
we start with explaining the reason why
the statement, as it is, of the aforementioned \cite[Lemma 9.1]{const1} 
does not hold for $\Phi_{t,\rho}$.

We consider the case when there is only one interior node and no boundary node.
Then there is only one gluing parameter which we denote by
$\sigma = e^{-(T + \sqrt{-1}\theta)}$.
When we change the coordinate of the gluing parameter to $\sigma' = \lambda \sigma$, 
where $\lambda$ is a positive real number,
the parameters $T$, $\theta$ change to
$
T' = T- \log \lambda$, $\theta' = \theta.
$
Then the coordinate $\rho = e^{\sqrt{-1}\theta}/T$ becomes 
$$
\rho' = e^{\sqrt{-1}\theta}/T' 
=  \frac{e^{\sqrt{-1}\theta}}{{-} \log \lambda + \frac{1}{\vert\rho\vert}}
= \frac{\rho}{1 {-}  (\log \lambda) \vert \rho\vert}.
$$
This function $\rho' = \rho'(\rho)$ is $C^1$ in $\rho$, but not $C^2$,  at the origin.
(This has been pointed out by  A. Daemi and a referee of \cite{DF}.)
\par
On the other hand, if we use the new coordinate 
system of $S = \log T$ and $\phi  = e^{\sqrt{-1}\theta}/S$, then we have
$$
\phi' = \frac{\phi}{1 + \vert \phi\vert  \log (1{-} (\log \lambda) e^{-1/\vert\phi\vert} )}.
$$
All the derivatives (of arbitrary order) of the denominator of the right hand side function
vanish at $\phi = 0$. In particular $\phi \mapsto \phi'$ is smooth at the origin.
\par
In the case where there is no interior node and one boundary node, 
we consider a real parameter $T$ and $t = 1/T$.
If we change $e^{-T}$ to $\lambda e^{-T}$, then $t$ is changed to $t'$, where
$$
t' = \frac{t}{1 {-}  (\log \lambda) t}.
$$
This is smooth with respect to the non-negative real parameter $t$ at $t=0$.
\begin{rem}
We can construct a $C^1$ smooth  structure of  $\mathcal M^{\rm d}_{k+1,\ell}$
still using $\Phi_{t,\rho}$. As far as we use multi-sections to perturb the moduli 
space of pseudo-holomorphic curves (as in \cite{fooobook2} and others),  we can use a $C^1$ Kuranishi structure 
in place of the $C^{\infty}$ Kuranishi structure.
However when we use the de-Rham  model to obtain a  virtual fundamental 
chain as in \cite{springer}, it seems more natural to use the 
$C^{\infty}$ structure rather than the $C^1$ structure. For such a purpose,
the $C^\infty$ smooth structure arising from Lemma \ref{lem91} will be 
important.
\end{rem}

\section{Double log smooth structure of $\mathcal M^{\rm d}_{k+1,\ell}$}

Motivated by the discussion in the previous section, we revise
the statement of aforementioned lemma as follows.

We take the step of taking another logarithm,
\begin{equation}\label{form912}
S^{\rm d}_j =  {\rm log}\, T^{\rm d}_j,  \qquad
S^{\rm s}_i =  {\rm log}\, T^{\rm s}_i,
\end{equation}
and define
\begin{equation}\label{form92}
s_j = 1/S^{\rm d}_j \in [0,1/\log(-\log c)), 
\quad
\phi_i =  \exp({\sqrt{-1}}\theta_i)/S^{\rm s}_i \in D^2(1/\log(-\log c)).
\end{equation}
Composing  these coordinate changes with the map $\Phi$, we define
\begin{equation}\label{form9393}
\Psi_{s,\phi} : 
\prod_{a\in \mathcal A_{\bf p}^{\rm s} \cup \mathcal A_{\bf p}^{\rm d}} \mathcal V_a 
\times [0,1/\log(-\log c))^{m_{\rm d}} \times (D^2_{\circ}(1/\log(-\log c)))^{m_{\rm s}}
\to \mathcal M^{\rm d}_{k+1,\ell}.
\end{equation}
Now the corrected version of \cite[Lemma 9.1]{const1} is the following.
We call the associated smooth structure \emph{the double log smooth structure}
of $\mathcal M^{\rm d}_{k+1,\ell}$.

\begin{lem}[Double log smooth structure]\label{lem91}
There exists a unique structure of smooth manifold with 
corners on $\mathcal M^{\rm d}_{k+1,\ell}$ 
such that $\Psi_{s,\phi}$ is a diffeomorphism onto its image for each 
${\bf p} \in \mathcal M^{\rm d}_{k+1,\ell}$.\footnote{And for any analytic 
family of coordinates of the nodal points which is used to define $\Phi$.}
\end{lem}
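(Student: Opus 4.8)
The plan is to reduce the statement to a check of compatibility of the transition maps between the charts $\Psi_{s,\phi}$ associated with different base points $\mathbf p$ and different analytic families of coordinates at the nodes. Since \cite[Lemma 3.4]{const1} already produces the maps $\Phi$ (hence $\Phi_{t,\rho}$ and $\Psi_{s,\phi}$) as continuous injections onto open neighborhoods, and since these neighborhoods cover $\mathcal M^{\rm d}_{k+1,\ell}$ as $\mathbf p$ varies, uniqueness is automatic once we know the transition maps are diffeomorphisms in the sense of manifolds with corners: a smooth structure is then forced by declaring the $\Psi_{s,\phi}$ to be charts. So the whole content is existence, i.e. smoothness of $\Psi_{s,\phi_2}^{-1}\circ\Psi_{s,\phi_1}$ wherever defined.

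First I would recall, from \cite[Lemma 9.1]{const1} and its (corrected) $C^1$ version, that the transition maps for $\Phi_{t,\rho}$ are already known to be homeomorphisms which are smooth in the $\mathcal V_a$-directions and in the interior of the gluing-parameter polydisc; the only defect is at the boundary/corner locus $\{t_j=0\}\cup\{\rho_i=0\}$. The key algebraic input is exactly the computation displayed in the Overview: a change of analytic coordinate at a node rescales the gluing parameter, $\sigma\mapsto \lambda(\cdot)\sigma$ with $\lambda$ a positive smooth (indeed real-analytic) function of the remaining moduli, and in the $(s,\phi)$ coordinates this induces
\[
\phi' \;=\; \frac{\phi}{1+\vert\phi\vert\,\log\!\bigl(1-(\log\lambda)\,e^{-1/\vert\phi\vert}\bigr)},\qquad
s' \;=\; \frac{s}{1+s\,\log\!\bigl(1-(\log\lambda)\,e^{-1/s}\bigr)}
\]
for the interior- and boundary-node directions respectively (with $\lambda$ itself depending smoothly on the $\mathcal V_a$-coordinates and on the other parameters). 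The crux is the elementary but central Lemma: the function $(s,\lambda)\mapsto s\log(1-(\log\lambda)e^{-1/s})$, extended by $0$ at $s=0$, is $C^\infty$ on $[0,\varepsilon)\times(\text{nbhd of }1)$, and likewise in the variable $\phi$ with $e^{-1/s}$ replaced by $e^{-1/\vert\phi\vert}$; this is because every partial derivative of $e^{-1/s}$ (or of $e^{-1/\vert\phi\vert}$, away from $\phi=0$) is bounded by a polynomial times $e^{-1/s}$ and hence vanishes to infinite order as $s\downarrow 0$, while the outer $\log(1-\cdot)$ is real-analytic near $0$. Consequently the denominators above are smooth, equal $1$ on the corner locus, and stay bounded away from $0$, so $\phi'$ and $s'$ are smooth functions of $(\phi,s,\text{moduli})$; the $\mathcal V_a$-components of the transition map are handled by the same analysis, the $\lambda$'s there being real-analytic in those coordinates.

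The main obstacle I anticipate is bookkeeping in the presence of several nodes simultaneously, where one must track how smoothing one node alters the analytic coordinates available at the others (the "nested" dependence encoded in \cite[Definition 3.1]{const1} of the analytic family of coordinates), and verify that the resulting $\lambda$'s depend smoothly on $(s_1,\dots,s_{m_{\rm d}},\phi_1,\dots,\phi_{m_{\rm s}})$ jointly, including their boundary values — i.e. that no cross-terms between different gluing parameters spoil smoothness at the corners. I would organize this by induction on the number of nodes, peeling off one node at a time and applying the single-node computation above with $\lambda$ now allowed to be a smooth function of the already-treated variables, using that $s\mapsto e^{-1/s}$ composed with smooth functions still vanishes to infinite order. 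Finally, that $\Psi_{s,\phi}$ is a diffeomorphism onto its image with respect to the so-defined structure, and that the structure is one of a manifold \emph{with corners} (the corner strata being the images of $\{s_j=0\}$ and $\{\phi_i=0\}$, compatibly identified), follows formally once the transition maps are known to be corner-preserving diffeomorphisms; uniqueness follows as indicated above, since any two smooth structures making all the $\Psi_{s,\phi}$ diffeomorphisms have identical charts on a common cover.
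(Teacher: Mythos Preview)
Your overall strategy is sound and shares its underlying mechanism with the paper's proof: both rest on the fact that $e^{-1/s}$ (resp.\ $e^{-1/|\phi|}$) vanishes to infinite order at $0$. But your explicit formulas for $s'$ and $\phi'$ only cover the model transition $\sigma\mapsto\lambda\sigma$ with $\lambda$ a \emph{positive real} function of the ``remaining'' moduli, and this is not the general shape of the coordinate change. For the transition between charts at ${\bf p}$ and ${\bf q}\in\mathrm{Im}(\Psi^{\bf p})$ one has $\sigma^{\bf p}_{i_0}=\lambda_{i_0}\sigma^{\bf q}_{i_0}$ where $\lambda_{i_0}$ is a nowhere vanishing \emph{holomorphic} function of \emph{all} the gluing parameters (including $\sigma^{\bf q}_{i_0}$ itself) and of the $\frak v^{\bf q}_a$; in particular $\lambda_{i_0}$ is complex-valued for interior nodes, so $\theta^{\bf p}_{i_0}=\theta^{\bf q}_{i_0}+\arg\lambda_{i_0}$ and your formula for $\phi'$ misses the rotation factor $e^{\sqrt{-1}\arg\lambda_{i_0}}$, whose smoothness in $(s^{\bf q},\phi^{\bf q},\frak v^{\bf q})$ must be checked separately. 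Your proposed induction on the number of nodes does not address the self-dependence of $\lambda_{i_0}$ on $\sigma^{\bf q}_{i_0}$, nor the simultaneous joint dependence on all the other $\sigma^{\bf q}_j$'s.

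The paper handles all of this at once, without explicit formulas or induction. It first proves the exponential decay estimates (\ref{form94}), (\ref{form942}) on the $T^{\bf q}$-derivatives of $T^{\bf p}-T^{\bf q}$ and $\theta^{\bf p}-\theta^{\bf q}$, directly from the holomorphicity and nonvanishing of $\sigma^{\bf p}_i/\sigma^{\bf q}_i$ (via \cite[Sublemma 8.29]{foooexp}); it then upgrades these to exponential decay in the $S^{\bf q}$-variables ((\ref{form99})--(\ref{concl4})), treating the angular part through the limit $f_{i_0}=\lim_{T^{{\rm s},\bf q}_{i_0}\to\infty}(\theta^{\bf p}_{i_0}-\theta^{\bf q}_{i_0})$. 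The resulting estimate (\ref{form100}) yields smoothness of the transition in $(s,\phi)$ directly, for all nodes at once. Your approach could in principle be completed---the point being that the dependence of $\lambda_{i_0}$ on any $\sigma^{\bf q}_j$ contributes only terms vanishing to infinite order in $s^{\bf q}_j$ or $|\phi^{\bf q}_j|$---but carrying that out rigorously amounts to reproducing the paper's estimates.
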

\begin{rem}
\cite[Lemma 9.1]{const1} was used in various references of the present 
authors to obtain a smooth Kuranishi structure 
of the moduli spaces of pseudo-holomorphic curves.
We can replace it by Lemma \ref{lem91}. Then 
all the results and proofs (which use \cite[Lemma 9.1]{const1}) 
go through using the smooth structure of 
Lemma \ref{lem91} in place of \cite[Lemma 9.1]{const1}.
\par
In fact, if $f$ is a smooth function on the parameter  $t_j, \rho_i$ 
then it is a smooth function on the parameter $s_j$, $\phi_i$.
\end{rem}
\begin{rem}
The coordinates $s_j$, $\phi_i$ are used in \cite[(25.1)]{springer}. 
{They} not only define a smooth structure but also define an admissible
smooth structure in the sense defined in \cite[Chapter 25]{springer} 
as the proof below shows.
\end{rem}
\begin{proof}[Proof of Lemma \ref{lem91}]
During this proof we write $\Psi^{\bf p}$ and etc. to clarify that the relevant object is associated to 
${\bf p} \in \mathcal M^{\rm d}_{k+1,\ell}$.
We denote by $\frak v^{\bf p}_a$ an 
element of the first factor of the domain of (\ref{form9393})
for ${\bf p}$.
We allow the positive constants $C_n$ and $c_n$ to vary in the various inequalities appearing in the proof below.
\par
Suppose 
${\bf q} \in {\rm Im}(\Psi^{\bf p})$.
Then $m_{\rm d}^{\bf q} \le m_{\rm d}^{\bf p}$, $m_{\rm s}^{\bf q} \le m_{\rm s}^{\bf p}$.
We may enumerate the marked points so that the $j$-th boundary node 
(resp. the $i$-th interior node) of ${\bf p}$ corresponds to the
$j$-th boundary node (resp. the $i$-th interior node) of ${\bf q}$ for
$j=1,\dots,m_{\rm d}^{\bf q}$ (resp. $i=1,\dots,m_{\rm s}^{\bf q}$).
This enables us to construct a natural (stratawise) smooth embedding 
$
\Phi_{{{\bf p}{\bf q}}}: {\mathcal V}^{\bf q} \hookrightarrow {\mathcal V}^{\bf p}
$
so that we can compare the two  functions  
$T^{\rm d,\bf p}_{j_0} \circ \Phi_{{{\bf p}{\bf q}}}$ and $T^{\rm d,\bf q}_{j_0}$  on $\mathcal V^{\bf q}$.
(See \cite[Proposition 8.27]{foooexp} for the explanation of this coordinate change map
$\Phi_{{{\bf p}{\bf q}}}$.)  This being explicitly said, we will abuse our notation by 
writing $T^{\rm d,\bf p}_{j_0} \circ \Phi_{{{\bf p}{\bf q}}}$ just as 
$T^{\rm d,\bf p}_{j_0}$ dropping the composition by $\Phi_{{{\bf p}{\bf q}}}$
in the following calculations.
\par
Then we can easily prove the next inequalities:
\begin{equation}\label{form94}
\aligned
\left\Vert \nabla^{n-1} \frac{\partial}{\partial T_{j}^{\rm d,\bf q}}(T^{\rm d,\bf p}_{j_0} - T^{\rm d,\bf q}_{j_0})
\right\Vert &\le C_n e^{-c_n T_{j}^{\rm d,\bf q}} \\
\left\Vert \nabla^{n-1} \frac{\partial}{\partial T_{i}^{{\rm s},\bf q}}(T^{\rm d,\bf p}_{j_0} - T^{\rm d,\bf q}_{j_0})
\right\Vert &\le C_n e^{-c_n T_{i}^{\rm s,\bf q}} \\
\left\Vert \nabla^{n-1} \frac{\partial}{\partial \theta_{i}^{\bf q}}(T^{\rm d,\bf p}_{j_0} - T^{\rm d,\bf q}_{j_0})
\right\Vert &\le C_n e^{-c_n T_{i}^{\rm s,\bf q}} 
\endaligned 
\end{equation}
for $j_0 = 1,\dots,m_{\rm d}^{\bf q}$.
Here $\nabla^{n-1}$ ($n \ge 1$)  is the $(n-1)$-th derivatives on the variables  $\frak v^{\bf q}_a$, $T_{j}^{\rm d,\bf q}$,
$T_{i}^{\rm s,\bf q}$, $\theta_{i}^{\bf q}$
and $\Vert \cdot \Vert$ is the $C^0$ norm.
Similar estimates hold for $T^{\rm s,\bf p}_{i_0}$ and $\theta^{\bf p}_{i_0}$.
\par
In fact, to prove the 2nd and 3rd inequalities of (\ref{form94}), we use the fact that 
the functions $\sigma^{\bf p}_i$, $\sigma^{\bf q}_i$ are holomorphic functions defining the same 
divisor and show that  $\sigma^{\bf p}_i/\sigma^{\bf q}_i$ is a nowhere vanishing holomorphic 
function. Then in the same way as \cite[Sublemma 8.29]{foooexp}
we obtain the 2nd and 3rd inequalities of (\ref{form94}).
The 1st inequality is proved in the same way by taking the double 
as in \cite[Section 2]{const1}.
\par 
Moreover 
in the same way we can derive the following:
\begin{equation}\label{form942}
\aligned
\left\Vert \nabla^{n-1} \frac{\partial}{\partial T_{j}^{\rm d,\bf q}}\frac{\partial}{\partial T_{i_0}^{\rm s,\bf q}}
(\theta_{i_0}^{\bf p} - \theta_{i_0}^{\bf q})
\right\Vert &\le C_n e^{-c_n (T_{j}^{\rm d,\bf q}+T_{i_0}^{\rm s,\bf q})} \\
\left\Vert \nabla^{n-1} \frac{\partial}{\partial T_{i}^{{\rm s},\bf q}}\frac{\partial}{\partial T_{i_0}^{\rm s,\bf q}}(\theta_{i_0}^{\bf p} - \theta_{i_0}^{\bf q})
\right\Vert &\le C_n e^{-c_n (T_{i}^{\rm s,\bf q}+T_{i_0}^{\rm s,\bf q})} \\
\left\Vert \nabla^{n-1} \frac{\partial}{\partial \theta_{i}^{\bf q}}\frac{\partial}{\partial T_{i_0}^{\rm s,\bf q}}(\theta_{i_0}^{\bf p} - \theta_{i_0}^{\bf q})
\right\Vert &\le C_n e^{-c_n (T_{i}^{\rm s,\bf q}+T_{i_0}^{\rm s,\bf q})} \blue{.}
\endaligned 
\end{equation}
In the same way we can show similar estimates for higher derivatives on the gluing parameters.

The first formula of (\ref{form94}) applied to the case $j=j_0$ implies that 
$T^{\rm d,\bf p}_{j_0} - T^{\rm d,\bf q}_{j_0}$ is bounded.
Using $ \log(R+c) - \log R = c/R + O(1/R^2)$ and rewriting 
\begin{eqnarray*}
S^{\rm s,\bf p}_{i_0} - S^{\rm s,\bf q}_{i_0} & = & 
\log T^{\rm s,\bf p}_{i_0} - \log T^{\rm s,\bf q}_{i_0} \\
& = & \log \left(T^{\rm s,\bf q}_{i_0} + (T^{\rm s,\bf p}_{i_0} - T^{\rm s,\bf q}_{i_0})\right)
- \log T^{\rm s,\bf q}_{i_0}
\end{eqnarray*}
(for $R := T^{\rm s,\bf q}_{i_0}$, $c: = T^{\rm s,\bf p}_{i_0} - T^{\rm s,\bf q}_{i_0}$),
we can apply
the inequality (\ref{form94}) and a similar inequality for $T^{\rm s,\bf q}_{i_0}$  
to derive 
\begin{equation}\label{form99}
\aligned
\left\Vert \nabla_{\frak v}^{n-1}(S^{\rm s,\bf p}_{i_0} - S^{\rm s,\bf q}_{i_0})
\right\Vert &\le C_n/T^{\rm s,\bf q}_{i_0} \le C_n e^{-c_n S_{i_0}^{\rm s,\bf q}}
\\
\left\Vert \nabla_{\frak v}^{n-1}(S^{\rm d,\bf p}_{j_0} - S^{\rm d,\bf q}_{j_0})
\right\Vert &\le C_n/T^{\rm s,\bf q}_{j_0} \le C_n e^{-c_n S_{j_0}^{\rm d,\bf q}}\blue{.}
\endaligned
\end{equation}
Here $\nabla_{\frak v}^{n-1}$ ($n \ge 1$) is the $(n-1)$-th derivatives on the variables 
 $\frak v^{\bf q}_a$. (In particular, the case $n-1 = 0$ is included.)

We next calculate, for $j\ne j_0$
$$
\aligned
\left\Vert \nabla_{\frak v}^{n-1}\frac{\partial (S^{\rm d,\bf p}_{j_0} - S^{\rm d,\bf q}_{j_0})}{\partial S^{\rm d,\bf q}_{j}} \right\Vert 
&\le  \left\Vert \nabla_{\frak v}^{n-1} \left(\frac{ T^{\rm d,\bf q}_{j_0}}{ T^{\rm d,\bf p}_{j_0}} \frac{\partial }{\partial S^{\rm d,\bf q}_{j}}
\frac{T^{\rm d,\bf p}_{j_0}}{T^{\rm d,\bf q}_{j_0}}\right)\right\Vert 
\\
&\le  \left\Vert \nabla_{\frak v}^{n-1} \left(\frac{ T^{\rm d,\bf q}_{j_0}}{ T^{\rm d,\bf p}_{j_0}}
(T^{\rm d,\bf q}_{j_0})^{-1}\frac{\partial (T^{\rm d,\bf p}_{j_0}- T^{\rm d,\bf q}_{j_0})}{\partial T^{\rm d,{\bf q}}_{j}}\right)\right\Vert 
\Vert T^{\rm d,\bf q}_{j}\Vert \\
&  \le C_n e^{-c_n (S_{j}^{\rm d,\bf q}+S_{j_0}^{\rm d,\bf q})}\blue{.}
\endaligned
$$
For  the second line we used the identity
$T^{\rm d,{\bf q}}_{j}\partial / \partial T^{\rm d,\bf q}_{j} = \partial /\partial S^{\rm d,{\bf q}}_{j}$.
For the third line we used  the inequalities in (\ref{form94}). 
(We also use the fact that $ { T^{\rm d,\bf q}_{j_0}}/{ T^{\rm d,\bf p}_{j_0}}$ is bounded 
together with its $\frak v^{\bf q}$ derivatives.)
\par
For  $j =  j_0$, we have an extra term
$$
\left\Vert \nabla_{\frak v}^{n-1} \left(\frac{ T^{\rm d,\bf q}_{j_0}}{ T^{\rm d,\bf p}_{j_0}}({T^{\rm d,\bf p}_{j_0}})^{-2}
(T^{\rm d,\bf p}_{j_0}- T^{\rm d,\bf q}_{j_0})\right)\right\Vert \Vert T^{\rm d,\bf q}_{j_0}\Vert
$$
in the second line.
It can be estimated also by  $C_n e^{-c_n S_{j_0}^{\rm d,\bf q}}$.

We can estimate $S^{\rm s,\bf q}_{i}$ and $\theta^{\bf q}_{i}$ derivatives in the same way. The estimates of higher derivatives are similar.

Furthermore we can estimate derivatives of $S^{\rm s,\bf p}_{i_0} - S^{\rm s,\bf q}_{i_0}$
in the same way.
Thus we have
\begin{equation}\label{concl1}
\aligned
\left\Vert \nabla^{\prime,n-1}\frac{\partial (S^{\rm d,\bf p}_{j_0} - S^{\rm d,\bf q}_{j_0})}{\partial S^{\rm d,\bf q}_{j}}
\right\Vert &\le C_n e^{-c_n (S_{j}^{\rm d,\bf q}+S_{j_0}^{\rm d,\bf q})} \\
\left\Vert \nabla^{\prime,n-1}\frac{\partial (S^{\rm d,\bf p}_{j_0} - S^{\rm d,\bf q}_{j_0})}{\partial S^{\rm s,\bf q}_{i}}
\right\Vert &\le C_n e^{-c_n (S_{i}^{\rm s,\bf q}+S_{j_0}^{\rm d,\bf q})} \\
\left\Vert \nabla^{\prime,n-1}\frac{\partial (S^{\rm d,\bf p}_{j_0} - S^{\rm d,\bf q}_{j_0})}{\partial \theta^{\bf q}_{i}}
\right\Vert &\le C_n e^{-c_n (S_{i}^{\rm s,\bf q}+S_{j_0}^{\rm d,\bf q})}
\endaligned 
\end{equation}
and
\begin{equation}\label{concl0}
\aligned
\left\Vert \nabla^{\prime,n-1}\frac{\partial (S^{\rm s,\bf p}_{i_0} - S^{\rm s,\bf q}_{i_0})}{\partial S^{\rm d,\bf q}_{j}}
\right\Vert &\le C_n e^{-c_n (S_{j}^{\rm d,\bf q}+S_{i_0}^{\rm s,\bf q})} \\
\left\Vert \nabla^{\prime,n-1}\frac{\partial (S^{\rm s,\bf p}_{i_0} - S^{\rm s,\bf q}_{i_0})}{\partial S^{\rm s,\bf q}_{i}}
\right\Vert &\le C_n e^{-c_n (S_{i}^{\rm s,\bf q}+S_{i_0}^{\rm s,\bf q})} \\
\left\Vert \nabla^{\prime,n-1}\frac{\partial (S^{\rm s,\bf p}_{i_0} - S^{\rm d,\bf s}_{i_0})}{\partial \theta^{\bf q}_{i}}
\right\Vert &\le C_n e^{-c_n (S_{i}^{\rm s,\bf q}+S_{i_0}^{\rm s,\bf q})}\blue{.}
\endaligned 
\end{equation}
Here $\nabla^{\prime,n-1}$ is the $(n-1)$-th derivatives on the variables  $\frak v^{\bf q}_a$, $S_{j}^{\rm d,\bf q}$,
$S_{i}^{\rm s,\bf q}$, $\theta_{i}^{\bf q}$. (Here again 
$n \ge 1$ is assumed and so the case $n-1 = 0$ is included.)
{We can show a similar estimate for  higher derivatives of $S^{\rm d,\bf q}_{j}$, 
${S^{\rm s,\bf q}_{i}}$, ${\theta^{\bf q}_{i}}$ in the same way.}

We next study $\theta_{i_{{0}}}^{\bf q}$.
We put
$$
f_{i_{{0}}} = \lim_{T_{i_{{0}}}^{\rm s,{\bf q}} \to \infty} (\theta_{i_{{0}}}^{{\bf p}} - \theta_{i_{{0}}}^{{\bf q}}).
$$
An analogue of (\ref{form94}) with $T_{j_0}^{{\rm d},\bf q}$ replaced by $\theta_{i_{{0}}}^{\bf q}$
implies that the right hand side limit exists which is smooth on the variables  
$\frak v^{\bf q}_a$, $T_{j}^{\rm d,\bf q}$
$T_{i}^{\rm s,\bf q}$, $\theta_{i}^{\bf q}$. 
{The function $f_{i_{0}}$ is independent of $T_{i_0}^{\rm s,\bf q}$, $\theta_{i_{0}}^{\bf q}$ .}
Then we can use (\ref{form942}) 
to derive the following collection of inequalities:
\begin{equation}\label{concl3}
\aligned
\left\Vert \nabla^{\prime,n-1}\frac{\partial f_{i_0} }{\partial S^{\rm d,\bf q}_{j}}
\right\Vert &\le C_n e^{-c_n S_{j}^{\rm d,\bf q}} \\
\left\Vert \nabla^{\prime,n-1}\frac{\partial f_{i_0} }{\partial S^{\rm s,\bf q}_{i}}
\right\Vert &\le C_n e^{-c_n S_{i}^{\rm s,\bf q}} \\
\left\Vert \nabla^{\prime,n-1}\frac{\partial f_{i_0}}{\partial \theta^{\bf q}_{i}}
\right\Vert &\le C_n e^{-c_n S_{i}^{\rm s,\bf q}}
\endaligned
\end{equation}
and
\begin{equation}\label{concl4}
\aligned
\left\Vert \nabla^{\prime,n-1}\frac{\partial (\theta_{i_0}^{{\bf p}} - \theta_{i_0}^{{\bf q}}-f_{i_0})}{\partial S^{\rm d,\bf q}_{j}}
\right\Vert &\le C_n e^{-c_n (S_{j}^{\rm d,\bf q}+S_{i_0}^{\rm s,\bf q})} \\
\left\Vert \nabla^{\prime,n-1}\frac{\partial (\theta_{i_0}^{{\bf p}} - \theta_{i_0}^{{\bf q}}-f_{i_0})}{\partial S^{\rm s,\bf q}_{i}}
\right\Vert &\le C_n e^{-c_n (S_{i}^{\rm s,\bf q}+S_{i_0}^{\rm s,\bf q})} \\
\left\Vert \nabla^{\prime,n-1}\frac{\partial (\theta_{i_0}^{{\bf p}} - \theta_{i_0}^{{\bf q}}-f_{i_0})}{\partial \theta^{\bf q}_{i}}
\right\Vert &\le C_n e^{-c_n (S_{i}^{\rm s,\bf q}+S_{i_0}^{\rm s,\bf q})}.
\endaligned 
\end{equation}

\par
Furthermore for each $n \geq 1$,  we can combine
(\ref{form99}), (\ref{concl1}),  (\ref{concl0}), (\ref{concl3}), (\ref{concl4}) 
{(and similar estimates for higher derivatives)} to derive
\begin{equation}\label{form100}
\aligned
\left\Vert \nabla^{\prime,n-1}(s^{\bf p}_{j_0} - s^{\bf q}_{j_0})
\right\Vert & \le C_n e^{-c_n / s_{j_0}^{\bf q}}
\\
\left\Vert \nabla^{\prime,n-1}(\phi^{\bf p}_{i_0} - e^{\sqrt{-1}f_{i_0}}\phi^{\bf q}_{i_0})
\right\Vert &\le C_n e^{-c_n / \vert \phi^{\bf q}_{i_0}\vert}.
\endaligned
\end{equation}
($n \ge 1$.)
Finally it follows from (\ref{form100}) that
the map $(\frak v^{\bf q},(s^{\rm d,\bf q}_{j}),(\phi^{\bf q}_{i})) 
\mapsto (\frak v^{\bf p},(s^{\rm d,\bf p}_{j}),(\phi^{\bf p}_{i}))$
is a diffeomorphism (including the point where some of the coordinates are zero).\footnote{
It is easy to see that $\frak v^{\bf p}$ depends smoothly on 
$(\frak v^{\bf q},(s^{\rm d,\bf q}_{j}),(\phi^{\bf q}_{i}))$.}
This implies that the coordinate change map is smooth and so follows the lemma.
\end{proof}

\bibliographystyle{amsalpha}

\end{document}